\journal{Elsevier}
\newtheorem{defn}{Definition}[section]
\newtheorem{remark}{Remark}[section]
\newtheorem{thm}{Theorem}[section]
\begin{document}

\begin{frontmatter}

\title{Algebraic properties of bi$-$periodic dual Fibonacci quaternions}

\author{Fatma Ate\c{s}\corref{mycorrespondingauthor}}
\cortext[mycorrespondingauthor]{Corresponding author}
\ead{fgokcelik@ankara.edu.tr}

\author{Ismail G\"{o}k\corref{}}
\ead{igok@science.ankara.edu.tr}

\author{Nejat Ekmekci\corref{}}
\ead{igok@science.ankara.edu.tr}

\address{Ankara University, Faculty of Science, Department of Mathematics, Tandogan, TR06100, Ankara, Turkey}

\begin{abstract}
The purpose of the paper is to construct a new representation of dual
quaternions called bi$-$periodic dual Fibonacci quaternions. These
quaternions are originated as a generalization of the known quaternions in
literature such as dual Fibonacci quaternions, dual Pell quaternions and
dual $k-$Fibonacci quaternions. Furthermore, some of them have not been
introduced until this time. Then, we give generating function, Binet formula
and Catalan's identity in terms of these quaternions.

\end{abstract}

\begin{keyword}
Dual Fibonacci Quaternions, bi-periodic Fibonacci Quaternions, bi-periodic Dual Fibonacci Quaternions
\MSC[2010] 11B39, 05A15, 11R52
\end{keyword}

\end{frontmatter}

\section{Introduction}

In linear algebra the complex numbers extend the real numbers by adjoining
one new element $\mathbf{i}$ with the property $\mathbf{i}^{2}=-1$ and the
dual numbers extend the real numbers by adjoining one new nonzero element $%
\varepsilon $ with the property $\varepsilon ^{2}=0.$ Every complex number
has $z=a+\mathbf{i}b$ and the dual number has the form $d=a+\varepsilon b$.

The real quaternions are the number system in $4-$ dimensional vector space
which is extended by the complex numbers. William Rowan Hamilton in 1843
first introduced the number system. Then Clifford generalized the
quaternions to bi$-$quaternions with his work in 1873 and provided to
literature a useful tool for the analysis complex number. They are important
number system which use in kinematics, robotic technology, spatial rotation,
quantum physics and mechanics shortly both theoretical and applied
mathematics or physics.

Generally, there has three types of quaternions called real, complex and dual quaternions. A real quaternion is defined as $ q=q_{0}+q_{1}i+q_{2}j+q_{3}k$ with four units $\left( 1,i, j, k\right) $ where $\left( i, j, k\right) $ are orthogonal unit spatial vectors and $q_{i}$ $(i=0,1,2,3)$ are real numbers. A complex quaternion is an extension of complex numbers if the elements $q_{i}$ $
(i=0,1,2,3)$ are complex numbers and the other a dual quaternion is constructed by the dual numbers $q_{i}$ $(i=0,1,2,3).$ The family of dual quaternions is a Clifford algebra that can be used to represent spatial rigid body displacement in mathematics. Rigid motions in $3-$dimensional
space can be represented by dual quaternions of unit length this fact is used a theoretical kinematic and in applications to $3-$dimensional computer graphics, robotics and computer vision (\cite{car}, \cite{tor}, \cite
{yang}).

Fibonacci sequence was the outcome of a mathematical problem about
well-known rabbit breeding. In algebra, Fibonacci sequence is perhaps one of
the most useful sequence and it has important applications to different
disciplines such as fractal geometry, computer graphics, biology and CAD$-$CAM applications. Fibonacci numbers are characterized by the fact that every number after the first two numbers is the sum of the two proceeding numbers, that is, the numbers have the form
\begin{center}
$ 0, 1, 1, 2, 3, 5, 8, 13, 21, 34, 55,\ldots $ 
\end{center}

To find the $n$ $th$ terms of Fibonacci sequence, Binet's formula which was found by mathematician Jacques Philippe Marie Binet is used explicit formula. Then, there are well known two identities for Fibonacci sequence. In 1680, Jean-Dominique Cassini was discovered Cassini's identity for
Fibonacci numbers. In 1879, Eugene Charles Catalan found the generalization of the Cassini's identity after him called Catalan's identity.

The Fibonacci quaternion is first expressed by Horadam \cite{Horadam} as\\
$Q_{n}=F_{n}+F_{n}i+F_{n+2}j+F_{n+3}k$ where $F_{n}$ is the $n$ $th$
Fibonacci number$.$ After the study, some results in this field have been investigated many authors (\cite{cat}, \cite{cimen}, \cite{falcon}, \cite{hamilton}, \cite{hor}, \cite{iyer}, \cite{iyer2}, \cite{swam}, \cite{yayen}). S. Hal\i c\i\ \cite{Halici} introduced the generating functions and Binet formulas for Fibonacci and Lucas quaternions and established some identities for these quaternions. C. B. \c{C}imen and A. \.{I}pek \cite{cimen} investigated the Pell and Pell$-$Lucas quaternions. In \cite{Tan}, they defined the bi$-$periodic Fibonacci quaternions. Furthermore, they gave the generalization of the Fibonacci quaternions, Pell quaternions, $k-$Fibonacci quaternions. Flaut and Shpakivskyi \cite{flaut} investigated some properties of generalized Fibonacci quaternions and Fibonacci-Narayana quaternions in quaternion algebra. In \cite{kaya2}, the authors investigated the geometric interpretation of the Fibonacci numbers and Fibonacci vectors.

In the present paper, we define the bi$-$periodic dual Fibonacci quaternions. We expressed the generating function, Binet formula, Catalan's identity and Cassini's identity for these quaternions. Hence, this work is a generalization of some quaternions known as dual Fibonacci, Pell and $k-$Fibonacci quaternions.

\section{Basic Concepts and Arguments}
In the present days, many mathematicians studied several problems and
results involving the Fibonacci sequences, and consequently we need the
generalizations of this sequence.

The $n$ $th$ term of Fibonacci number is given by the formula 
\begin{center}
$ F_{n}=F_{n-1}+F_{n-2}, \   \ n \geq2 $
\end{center}
where $F_{0}=0$ and $F_{1}=1$.

Edson and Yayenie \cite{edson} introduced the new generalization of the
Fibonacci numbers called as bi$-$periodic Fibonacci numbers 
\begin{equation}\label{1}
F_{n}=\left\{\begin{array}{lll}
aF_{n-1}+F_{n-2}, \textrm{ if n is even}& \medskip \\
bF_{n-1}+F_{n-2}, \textrm{ if n is odd}& \medskip \\
\end{array}\right.
\end{equation}
with the initial values $F_{0}=0$ and $F_{1}=1$, where $a$ and $b$ are nonzero numbers. In \ref{1}, the classical Fibonacci sequence is obtained if $a=b=1$, the Pell sequence is get if $a=b=2$ and if we take $a=b=k$ , we get the $k-$Fibonacci sequence. Also, the terms of the bi$-$periodic Fibonacci sequence satisfy the equation $F_{n}=(ab+2)F_{n-2}-F_{n-4},$ $n\geq 4.$

The generating function of the Fibonacci sequence is defined following as
\begin{center}
$ F(x)=\frac{x+ax^{2}-x^{3}}{1-(ab+2)x^{2}+x^{4}}. $
\end{center}
The Binet formula for the Fibonacci numbers theory is represented in \cite{stak}. Also, the Binet formula allows us to express the bi$-$periodic dual Fibonacci quaternion in function. The Binet formula of the Fibonacci sequence is denoted by 
\begin{equation}
F_{n}=\frac{a^{\xi (n+1)}}{(ab)^{\left\lfloor \frac{n}{2}\right\rfloor }}
\frac{\alpha ^{n}-\beta ^{n}}{\alpha -\beta }  \label{2}
\end{equation}
where $\xi (n)=n-2\left\lfloor \frac{n}{2}\right\rfloor $ ,$\ $namely, $\xi
(n)=0$ when $n$ is even and $\xi (n)=1$ when $n$ is odd. It can be shown
that $F_{-n}=(-1)^{n-1}F_{n}.$

The 19th century Irish mathematician and physicist William Rowan Hamilton
was impressed by the role of complex numbers in two dimensional geometry.
Hence, he discovered a $4-$dimensional split algebra called the quaternions
and the quaternions introduced by him  
\begin{center}
$ q=a+bi+cj+dk $
\end{center}
where $a$, $b$, $c$, $d$ $  \in R $ and the vectors $i$, $j$ and $k$ are satisfied the following properties: 
\begin{center}
$ i^{2}=j^{2}=k^{2}=ijk=-1, $
\end{center}
\begin{center}
$ ij=k=-ji,jk=i=-kj,ki=j=-ik. $
\end{center}

After him, the dual quaternions are defined a Clifford algebra that can be
used to represent spatial rigid body displacements in mathematics and
mechanics. A dual quaternion is constructed by eight real parameters as
\begin{center}
$ Q=q+\varepsilon q^{\ast } $
\end{center}
where $q$, $q^{\ast }$ are the real quaternions and $\varepsilon $ is the
dual unit such that $\varepsilon \neq 0$ and $\varepsilon ^{2}=0.$ 

Horadam \cite{Horadam} put forward a new definition by taking the Fibonacci numbers replaced with the real numbers in the quaternions. Then, the authors \cite{Kaya} presented the $n$ $th$ dual Fibonacci quaternions $\widetilde{Q}_{n}=Q_{n}+\varepsilon Q_{n+1}$ using the $n$ $th$ dual Fibonacci numbers  $\widetilde{F}_{n}=F_{n}+\varepsilon F_{n+1}$.

\begin{defn}
	The bi$-$periodic Fibonacci quaternions are 
	\begin{center}
	$ Q_{n}=q_{n}+q_{n+1}i+q_{n+2}j+q_{n+3}k $
	\end{center}
	where $q_{n}$ is the $n$ $th$ bi$-$periodic Fibonacci number (\textit{see for details in} \cite{Tan}).
\end{defn}

\section{bi$-$periodic Dual Fibonacci Quatenions}

In this section, we give the new definitions and generalizations of the bi$-$periodic dual Fibonacci quaternions.

\begin{defn}
	The bi$-$periodic dual Fibonacci numbers $\widetilde{F}_{n}$ is 
	\begin{center}
	$ \widetilde{F}_{n}=F_{n}+\varepsilon F_{n+1} $
	\end{center}
	where $F_{n}$ , $F_{n+1}$ are the $n \ {th}$ and ($n+1) \ {th}$ bi$-$periodic
	Fibonacci numbers in \ref{1}, respectively.
\end{defn}

\begin{defn}
	The bi$-$periodic dual $n \ {th}$ Fibonacci quaternions $\widetilde{Q}_{n}$ are defined by
\begin{equation}\label{3}
\widetilde{Q}_{n}= Q_{n}+\varepsilon Q_{n+1}
 \end{equation}
where $i$ , $j$ and $k$ are the standard orthonormal basis in $R^{3}.$
\end{defn}
Let $\widetilde{Q}_{n}=Q_{n}+\varepsilon Q_{n+1}$ and $\widetilde{P}_{n}=P_{n}+\varepsilon P_{n+1}$ be two bi$-$periodic dual Fibonacci quaternions and we define the addition and subtraction as follows:
$$
\widetilde{Q}_{n}\widetilde{P}_{n}=(Q_{n}+P_{n})+\varepsilon
(Q_{n+1}+P_{n+1})$$
here $\widetilde{Q}_{n}=\widetilde{F}_{n}+\widetilde{F}_{n+1}i+\widetilde{F}
_{n+2}j+\widetilde{F}_{n+3}k$ and $\widetilde{P}_{n}=\widetilde{G}_{n}+\widetilde{G}_{n+1}i+\widetilde{G}_{n+2}j+\widetilde{G}_{n+3}k$. Also, the
multiplication of them is given by

$$\widetilde{Q}_{n}\widetilde{P}_{n}=Q_{n}P_{n}+\varepsilon
(Q_{n+1}P_{n}+Q_{n}P_{n+1}).$$

\begin{thm}[Binet Formula] The Binet formula for the bi$-$periodic dual
	Fibonacci quaternion $\widetilde{Q}_{n}$ is given as:\\
	\begin{equation}\label{4}
\widetilde{Q}_{n}=\left\{\begin{array}{lll}
\frac{1}{(ab)^{\left\lfloor \frac{n}{2}\right\rfloor }}\left( \frac{\alpha
	^{\ast }\alpha ^{n}-\beta ^{\ast }\beta ^{n}}{(\alpha -\beta )}\right)  \\ 
+\varepsilon \left( \frac{1}{(ab)^{\left\lfloor \frac{n+1}{2}\right\rfloor }}
\left(\frac{\alpha ^{\ast \ast }\alpha ^{n+1}-\beta ^{\ast \ast }\beta
	^{n+1}}{\alpha -\beta }\right) \right) 
\textit{, if n is even}& \\ 
\\ 

\frac{1}{(ab)^{\left\lfloor \frac{n}{2}\right\rfloor }}\left( \frac{\alpha
	^{\ast \ast }\alpha ^{n}-\beta ^{\ast \ast }\beta ^{n}}{\alpha -\beta }
\right)  \\ 
+\varepsilon \left( \frac{1}{(ab)^{\left\lfloor \frac{n+1}{2}\right\rfloor }}
\left( \frac{\alpha ^{\ast }\alpha ^{n+1}-\beta ^{\ast }\beta ^{n+1}}{
	\alpha -\beta }\right) \right) \textit{, if n is odd}&
\end{array}\right.
\end{equation}
	
	where $\alpha ^{\ast }:=a+\alpha i+\dfrac{a}{ab}\alpha^2 j+\dfrac{1}{ab}\alpha^3 k,$ $\beta
	^{\ast }:=a+\beta i+\dfrac{a}{ab}\beta^2 j+\dfrac{1}{ab}\beta^3 k$ and $\alpha ^{\ast \ast
	}:=1+\dfrac{a}{ab}\alpha i+\dfrac{1}{ab}\alpha^2 j+\dfrac{a}{(ab)^2}\alpha^3 k,$ $\beta ^{\ast \ast
	}:=1+\dfrac{a}{ab}\beta i+\dfrac{1}{ab}\beta^2 j+\dfrac{a}{(ab)^2}\beta^3 k$ .
\end{thm}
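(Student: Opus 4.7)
The plan is to substitute the scalar bi-periodic Binet formula (\ref{2}) componentwise into $Q_n$ and $Q_{n+1}$, pull out a common factor $\frac{1}{(ab)^{\lfloor n/2 \rfloor}(\alpha-\beta)}$, and then collect the powers of $\alpha$ and $\beta$ so as to recognize the four-component vectors $\alpha^{*},\beta^{*},\alpha^{**},\beta^{**}$. Since $\widetilde{Q}_n=Q_n+\varepsilon Q_{n+1}$, the conclusion then follows by splitting on the parity of $n$.

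First, I would treat the case $n$ even. Under this assumption the four indices $n,n+1,n+2,n+3$ alternate in parity, so the scalar Binet prefactors $\frac{a^{\xi(\cdot+1)}}{(ab)^{\lfloor \cdot/2\rfloor}}$ take the four respective values $\frac{a}{(ab)^{n/2}},\frac{1}{(ab)^{n/2}},\frac{a}{(ab)^{n/2+1}},\frac{1}{(ab)^{n/2+1}}$. Substituting these into $Q_n=F_n+F_{n+1}i+F_{n+2}j+F_{n+3}k$, factoring $\frac{1}{(ab)^{n/2}(\alpha-\beta)}$ out front, and grouping the $\alpha$-part and the $\beta$-part, the coefficient of $\alpha^n$ becomes exactly $a+\alpha i+\frac{a}{ab}\alpha^2 j+\frac{1}{ab}\alpha^3 k=\alpha^{*}$, and the coefficient of $\beta^n$ becomes $\beta^{*}$. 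This gives the scalar part of (\ref{4}) in the even case. The same computation applied to $Q_{n+1}$, where the parity roles of the four indices are now reversed, produces the coefficients $\alpha^{**},\beta^{**}$; one only has to verify the identity $\lfloor(n+1)/2\rfloor=n/2$ for $n$ even, so that the prefactor matches the one displayed in (\ref{4}).

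Second, I would repeat this argument for $n$ odd. The parities of $n,n+1,n+2,n+3$ are now opposite to the previous case, so the roles of $\alpha^{*},\beta^{*}$ and $\alpha^{**},\beta^{**}$ are swapped between $Q_n$ and $Q_{n+1}$. A parallel factoring gives the second branch of (\ref{4}). Combining the two pieces via $\widetilde{Q}_n=Q_n+\varepsilon Q_{n+1}$ completes the proof.

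The main obstacle is bookkeeping rather than any substantive idea: one must carefully track how $\xi(n+k+1)$ and $\lfloor(n+k)/2\rfloor$ vary with $k\in\{0,1,2,3\}$ so that, after distributing the denominators, a single power $(ab)^{\lfloor n/2\rfloor}$ factors out cleanly and the extra factors of $a$ and $1/(ab)$ redistribute to form exactly the coefficients in $\alpha^{*},\alpha^{**}$. Writing a small parity table for $k=0,1,2,3$ in each case before collecting terms is the safest way to avoid sign and exponent slips.
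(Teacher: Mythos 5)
Your argument is correct, but it is not the one the paper gives. The paper proceeds by induction on $n$: it verifies the base case $n=0$ by hand and then, for the inductive step, simply invokes Theorem 2 of \cite{Tan}, which already supplies the closed forms $Q_{n+2}=\frac{\alpha^{\ast}\alpha^{n+2}-\beta^{\ast}\beta^{n+2}}{(ab)^{\lfloor (n+2)/2\rfloor}(\alpha-\beta)}$ and the analogous expression for $Q_{n+3}$; the induction hypothesis is never actually used, so the paper's proof amounts to ``apply the known quaternion Binet formula of \cite{Tan} to the two components of $\widetilde{Q}_{n}=Q_{n}+\varepsilon Q_{n+1}$.'' You instead rederive the quaternion Binet formula from scratch: you substitute the scalar formula (\ref{2}) into each of the four components of $Q_{n}$ and $Q_{n+1}$, track the prefactors $a^{\xi(\cdot+1)}/(ab)^{\lfloor\cdot/2\rfloor}$ across four consecutive indices, and factor out $(ab)^{-\lfloor n/2\rfloor}(\alpha-\beta)^{-1}$ so that the coefficients of $\alpha^{n}$ and $\beta^{n}$ are recognized as $\alpha^{\ast},\beta^{\ast}$ (and $\alpha^{\ast\ast},\beta^{\ast\ast}$ for the shifted index). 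Your bookkeeping checks out: in the even case the four prefactors are $a(ab)^{-n/2}$, $(ab)^{-n/2}$, $a(ab)^{-n/2-1}$, $(ab)^{-n/2-1}$, and $\lfloor(n+1)/2\rfloor=n/2$ makes the $\varepsilon$-part prefactor match the one displayed in (\ref{4}); the odd case swaps the starred and double-starred coefficients exactly as you say. In effect you are reproving Theorem 2 of \cite{Tan} and extending it to the dual part in one pass. What your route buys is self-containedness --- the definitions of $\alpha^{\ast}$ and $\alpha^{\ast\ast}$ emerge from the calculation rather than being quoted --- at the cost of the index bookkeeping you acknowledge; the paper's route is shorter but leans entirely on the cited result, and its induction scaffolding is essentially vestigial.
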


\begin{proof}
	Induction method can be used to prove that the Binet formula in equation \ref{4}. First, we show that the statement holds for $ n = 0 $.
	\begin{center}
		 $ \widetilde{Q}_{0}=(0,1,a,ab+1)+\varepsilon(1,a,ab+1,a(ab+2)) $
	\end{center}
and from the Binet formula, we have 
\begin{align*}
 \widetilde{Q}_{0}&=\dfrac{\alpha
	^{\ast }-\beta ^{\ast }}{\alpha -\beta }+\varepsilon(\dfrac{\alpha
	^{\ast \ast }\alpha-\beta ^{\ast \ast }\beta }{\alpha -\beta})\\
  &=\dfrac{1}{\alpha -\beta}(0,\alpha -\beta,\dfrac{a}{ab}(\alpha^2 -\beta^2),\dfrac{1}{ab}(\alpha^3 -\beta^3))+\\
  &\varepsilon \dfrac{1}{\alpha -\beta}(\alpha -\beta,\dfrac{a}{ab}(\alpha^2 -\beta^2),\dfrac{1}{ab}(\alpha^3 -\beta^3),\dfrac{a}{(ab)^2}(\alpha^4 -\beta^4)).
\end{align*}
By simplifying the above equation using the equalities $ \alpha -\beta=-ab $, $ \alpha+\beta=ab $, $ \alpha^2+\beta^2=ab(ab+2) $ , we get
 \begin{center}
 	$ \widetilde{Q}_{0}=Q_{0}+\varepsilon Q_{1} $
 \end{center}
Hence, the Binet formula in \ref{4} satisfied for $ n=0 $.

Assume that the $ \widetilde{Q}_{n} $ holds. It must then be shown that $ \widetilde{Q}_{n+1} $ holds. In here, there are two situations, $ n $ is even or odd.

If we take $ n $ is even, it is satisfied the following equation
\begin{align*}
\widetilde{Q}_{n}=
\frac{1}{(ab)^{\left\lfloor \frac{n}{2}\right\rfloor }}\left( \frac{\alpha
	^{\ast }\alpha ^{n}-\beta ^{\ast }\beta ^{n}}{(\alpha -\beta )}\right)+\varepsilon \left( \frac{1}{(ab)^{\left\lfloor \frac{n+1}{2}\right\rfloor }}
\left(\frac{\alpha ^{\ast \ast }\alpha ^{n+1}-\beta ^{\ast \ast }\beta
	^{n+1}}{\alpha -\beta }\right) \right).
\end{align*}
For  $ n+1 $,
\begin{align*}
\widetilde{Q}_{n+1}=Q_{n+2}+\varepsilon Q_{n+3}
\end{align*}
 By using Theorem 2 in the \cite{Tan}, we can write
  \begin{align*}
 Q_{n+2}=\frac{1}{(ab)^{\left\lfloor \frac{n+2}{2}\right\rfloor }}\left( \frac{\alpha
 	^{\ast }\alpha ^{n+2}-\beta ^{\ast }\beta ^{n+2}}{(\alpha -\beta )}\right)
 \end{align*} 
 and 
 \begin{align*}
 Q_{n+3}=\frac{1}{(ab)^{\left\lfloor \frac{n+3}{2}\right\rfloor }}\left( \frac{\alpha
 	^{\ast \ast }\alpha ^{n+3}-\beta ^{\ast \ast}\beta ^{n+3}}{(\alpha -\beta )}\right)
 \end{align*}
thereby showing that indeed $ \widetilde{Q}_{n+1} $ holds.
So, the Binet formula is obtained for all even numbers and using the same method it is shown for all odd numbers.
\end{proof}

\begin{remark}
	If we take $a=b=1$ in the above Binet formula of the bi$-$periodic dual Fibonacci quaternions, then we have the Binet formula of the dual Fibonacci quaternions in \cite{Kaya}.
\end{remark}

\begin{thm}[Generating function] The generating function for the bi-periodic
	dual Fibonacci quaternions is
	\begin{equation}
	G(t)=\frac{Q_{0}+(Q_{1}-bQ_{0})t+(a-b)R(t)}{1-bt-t^{2}}+\varepsilon \left( 
	\frac{Q_{1}+(Q_{2}-bQ_{1})t+(a-b)S(t)}{1-bt-t^{2}}\right)  \label{5}
	\end{equation}
	where $R(t)=tf(t)+(f(t)-t)i+(\frac{f(t)}{t}-1)j+(\frac{f(t)}{t^{2}}-\frac{1}{t}-(ab+1)t)k$ and $S(t)=(f(t)-t)+(\frac{f(t)}{t}-1)i+\left( \frac{f(t)}{t^{2}}-\frac{1}{t}-(ab+1)t\right) j+(\frac{f(t)}{t^{3}}-\frac{1}{t^{2}}-(ab+1))k$ with $f(t):=\sum\limits_{k=1}^{\infty }F_{2k-1}t^{2k-1}.$
\end{thm}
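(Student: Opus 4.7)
The plan is to exploit the dual decomposition $\widetilde{Q}_n = Q_n + \varepsilon Q_{n+1}$ to split $G(t)$ into two pieces, each a generating function for a bi-periodic Fibonacci quaternion series. Writing $G(t) = g(t) + \varepsilon\, h(t)$ with
\[
g(t) := \sum_{n\geq 0} Q_n\, t^n, \qquad h(t) := \sum_{n\geq 0} Q_{n+1}\, t^n,
\]
the theorem reduces to proving the identity
\[
(1-bt-t^{2})\,g(t) = Q_{0} + (Q_{1} - bQ_{0})\,t + (a-b)R(t)
\]
together with its shifted analogue that replaces $Q_{0},Q_{1},R$ by $Q_{1},Q_{2},S$.

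First I would multiply through and read off the coefficient of $t^n$; for $n \geq 2$ this equals $Q_n - bQ_{n-1} - Q_{n-2}$, whose four quaternion components are scalars of the form $F_m - bF_{m-1} - F_{m-2}$ for $m = n, n+1, n+2, n+3$. The key observation is that the bi-periodic recurrence \eqref{1} forces this scalar to vanish when $m$ is odd, and reduces it to $(a-b)F_{m-1}$ when $m$ is even (by subtracting the $b$-rule from the $a$-rule). Splitting by the parity of $n$ then gives
\[
Q_{n} - bQ_{n-1} - Q_{n-2} = \begin{cases} (a-b)\bigl(F_{n-1} + F_{n+1}\,j\bigr), & n \text{ even},\\ (a-b)\bigl(F_{n}\,i + F_{n+2}\,k\bigr), & n \text{ odd}.\end{cases}
\]

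The remaining step is to sum these against $t^n$ for $n \geq 2$ and recognize the four resulting series as the real, $i$, $j$, $k$ components of $R(t)$. Using $f(t) = \sum_{k\geq 1} F_{2k-1}\, t^{2k-1}$ together with the values $F_{1}=1$ and $F_{3}=ab+1$, elementary reindexing identifies each component sum; for instance $\sum_{n\text{ even},\, n\geq 2} F_{n-1}\,t^{n} = t\,f(t)$ and $\sum_{n\text{ odd},\, n\geq 3} F_{n+2}\,t^{n} = f(t)/t^{2} - 1/t - (ab+1)t$, and similarly for the other two slots, producing precisely the four summands of $R(t)$. The proof of the $h$-identity is entirely parallel but with all component indices advanced by one, so the parity roles swap and the four sums fall into the components of $S(t)$.

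The main obstacle is the parity bookkeeping: one must track carefully which of the four quaternion slots at each $n$ uses the even-index versus the odd-index recurrence, and verify that the apparent Laurent terms $1/t$ and $1/t^{2}$ appearing in $R(t)$ and $S(t)$ are cancelled by the leading monomials of $f(t)/t^{2}$ and $f(t)/t^{3}$, so that the right-hand side is in fact a genuine formal power series. Once that split is handled, every remaining step reduces to a straightforward reindexing of geometric-type sums against $f(t)$.
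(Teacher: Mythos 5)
Your proposal is correct and follows essentially the same route as the paper: multiply $G(t)=\sum_n \widetilde{Q}_n t^n$ by $1-bt-t^2$, identify the coefficient of $t^n$ ($n\ge 2$) as $Q_n-bQ_{n-1}-Q_{n-2}$ (resp.\ its shift for the dual part), and use the bi-periodic recurrence to reduce these to $(a-b)R(t)$ and $(a-b)S(t)$. In fact you supply the parity-splitting and reindexing details that the paper's proof leaves as ``using (1), we can calculate,'' and your component-by-component sums check out against the stated $R(t)$ and $S(t)$.
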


\begin{proof}
	The ordinary generating function of the bi$-$periodic dual Fibonacci
	quaternions is $G(t)=\sum\limits_{n=0}^{\infty }\widetilde{Q}_{_{n}}t^{n}$
	and using the equations $btG(t)$ and $t^{2}G(t)$,
	\begin{align*}
	G(t)=&\widetilde{Q}_{0}+\widetilde{Q}_{_{1}}t+\widetilde{Q}
	_{_{2}}t^{2}+\ldots +\widetilde{Q}_{_{n}}t^{n}+\ldots  \\
	btG(t) =&b\widetilde{Q}_{0}t+b\widetilde{Q}_{_{1}}t^{2}+b\widetilde{Q}
	_{_{2}}t^{3}+\ldots +b\widetilde{Q}_{_{n}}t^{n+1}+\ldots  \\
	t^{2}G(t) =&\widetilde{Q}_{0}t^{2}+\widetilde{Q}_{_{1}}t^{3}+\widetilde{Q}
	_{_{2}}t^{4}+\ldots +\widetilde{Q}_{_{n}}t^{n+2}+\ldots 
	\end{align*}
	and take into account the equation $\widetilde{Q}_{n}=Q_{n}+\varepsilon
	Q_{n+1}$ in the above equations, we get
	\begin{align*}
	G(t)-btG(t)-t^{2}G(t) &=Q_{0}+\varepsilon
	Q_{_{1}}+[(Q_{_{1}}-bQ_{0})+\varepsilon (Q_{2}-bQ_{_{1}})]t \\
	&+[(Q_{_{2}}-bQ_{_{1}}-Q_{0})+\varepsilon
	(Q_{_{3}}-bQ_{_{2}}-Q_{_{1}})]t^{2} \\
	&+[(Q_{_{3}}-bQ_{_{2}}-Q_{_{1}})+\varepsilon
	(Q_{_{4}}-bQ_{_{3}}-Q_{_{2}})]t^{3}+\ldots  \\
	&+[(Q_{_{n}}-bQ_{_{n-1}}-Q_{_{n-2}})+\varepsilon
	(Q_{n+1}-bQ_{_{n}}-Q_{n-1})]t^{n}+\ldots  \\
	&=Q_{0}+(Q_{_{1}}-bQ_{0})t+\sum\limits_{n=2}^{\infty }\left(
	Q_{n}-bQ_{n-1}-Q_{n-2}\right) t^{n} \\
	&+\varepsilon \left\{
	Q_{_{1}}+(Q_{2}-bQ_{_{1}})t+\sum\limits_{n=2}^{\infty }\left(
	Q_{n+1}-bQ_{n}-Q_{n-1}\right) t^{n}\right\} 
	\end{align*}
	And using $\left( \ref{1}\right) ,$ we can calculate 
	\begin{align*}
	\sum\limits_{n=2}^{\infty }\left( Q_{n}-bQ_{n-1}-Q_{n-2}\right) t^{n}
	=&(a-b)R(t), \\
	\sum\limits_{n=2}^{\infty }\left( Q_{n+1}-bQ_{n}-Q_{n-1}\right) t^{n}
	=&(a-b)S(t).
	\end{align*}
	
	So, the generating function becomes in \ref{5} for the
	bi-periodic dual Fibonacci quaternions.
\end{proof}

\begin{remark}
	The generating function of the dual Fibonacci quaternions for $a=b=1$ is
	obtained as follows 
	\begin{equation*}
	G(t)=\frac{Q_{0}+(Q_{1}-Q_{0})t}{1-t-t^{2}}+\varepsilon \left( \frac{Q_{1}+(Q_{2}-Q_{1})t}{1-t-t^{2}}\right) .
	\end{equation*}
\end{remark}

Now, we give the important identities of the bi-periodic dual Fibonacci
quaternions known as Catalan and Cassini's identities .

\begin{thm}[Catalan's identity] Let$\ r$ be nonnegative even integer number. The Catalan's identity for the bi$-$periodic dual Fibonacci quaternions is given by\\
	
	$\widetilde{Q}_{_{n-r}}\widetilde{Q}_{_{n+r}}-\widetilde{Q}_{_{n}}^{2}=$\\
	
	$\left\{ 
	\begin{array}{c}
	\begin{array}{c}
	\frac{\alpha ^{\ast }\beta ^{\ast }((ab)^{r}-\beta ^{2r})+\beta ^{\ast
		}\alpha ^{\ast }((ab)^{r}-\alpha ^{2r})}{(ab)^{r}(\alpha -\beta )^{2}} \\ 
	\\ 
	+\varepsilon \left[ \frac{(\alpha ^{\ast \ast }\beta ^{\ast }\alpha +\alpha
		^{\ast }\beta ^{\ast \ast }\beta )((ab)^{r}-\beta ^{2r})+(\beta ^{\ast
		}\alpha ^{\ast \ast }\alpha +\beta ^{\ast \ast }\alpha ^{\ast }\beta
		)((ab)^{r}-\alpha ^{2r})}{(ab)^{r}(\alpha -\beta )^{2}}\right]
	\end{array}
	, \text{if n is even}, \\ 
	\\ 
	\\ 
	\begin{array}{c}
	-\frac{\alpha ^{\ast \ast }\beta ^{\ast \ast }((ab)^{r}-\beta ^{2r})+\beta
		^{\ast \ast }\alpha ^{\ast \ast }((ab)^{r}-\alpha ^{2r})}{(ab)^{r-1}(\alpha
		-\beta )^{2}} \\ 
	\\ 
	+\varepsilon \{-\frac{(\alpha ^{\ast }\beta ^{\ast \ast }\alpha +\alpha
		^{\ast \ast }\beta ^{\ast }\beta )((ab)^{r}-\beta ^{2r})+(\beta ^{\ast
		}\alpha ^{\ast \ast }\beta +\beta ^{\ast \ast }\alpha ^{\ast }\alpha
		)((ab)^{r}-\alpha ^{2r})}{(ab)^{r}(\alpha -\beta )^{2}}\}%
	\end{array}
	,\text{if n is odd},
	\end{array}
	\right. $\\
	
	where $n$ and $r$ are the nonnegative integer numbers with $n\geq r.$
\end{thm}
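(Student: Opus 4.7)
The plan is to apply the Binet formula from Theorem 3.1 directly after splitting the identity into its real and dual parts. Since $\widetilde{Q}_m = Q_m + \varepsilon Q_{m+1}$, the difference decomposes as
\begin{align*}
\widetilde{Q}_{n-r}\widetilde{Q}_{n+r} - \widetilde{Q}_n^{2}
&= \bigl(Q_{n-r}Q_{n+r} - Q_n^{2}\bigr) \\
&\quad + \varepsilon\bigl[(Q_{n-r+1}Q_{n+r} + Q_{n-r}Q_{n+r+1}) - (Q_{n+1}Q_n + Q_nQ_{n+1})\bigr],
\end{align*}
so it suffices to expand the real and $\varepsilon$-parts separately.

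First I would exploit the hypothesis that $r$ is even: the indices $n-r$, $n$, $n+r$ all share the parity of $n$, and $n-r+1$, $n+1$, $n+r+1$ all share the opposite parity. Consequently, in each of the two cases for $n$, every $Q_m$ in the real (respectively dual) part uses the same \emph{starred} (resp.\ \emph{double-starred}) coefficient from Binet, and the $(ab)^{\lfloor m/2\rfloor}$ denominators combine to a uniform $(ab)^{n}$. Substituting Binet into $Q_{n-r}Q_{n+r}-Q_n^{2}$, the diagonal pieces involving $\alpha^{*2}\alpha^{2n}$ and $\beta^{*2}\beta^{2n}$ cancel, and I would use $\alpha\beta=-ab$ together with $(-ab)^{r}=(ab)^{r}$ (since $r$ is even) to rewrite $\alpha^{n-r}\beta^{n+r}=(\alpha\beta)^{n-r}\beta^{2r}$ and its companion. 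This produces the factor $(ab)^{r}-\beta^{2r}$ multiplying $\alpha^{*}\beta^{*}$ and $(ab)^{r}-\alpha^{2r}$ multiplying $\beta^{*}\alpha^{*}$, matching the stated real part for $n$ even.

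For the $\varepsilon$-part the same mechanism applies, but now four mixed products $Q_{n-r+1}Q_{n+r}$, $Q_{n-r}Q_{n+r+1}$, $Q_{n+1}Q_n$, $Q_nQ_{n+1}$ must be expanded. Each pairs a single-starred factor with a double-starred one, and because quaternion multiplication is non-commutative one cannot freely merge, say, $\alpha^{*}\beta^{**}$ with $\beta^{**}\alpha^{*}$. After distributing and again invoking $\alpha\beta=-ab$, the surviving cross-terms are of the form $\alpha^{**}\beta^{*}\alpha$, $\alpha^{*}\beta^{**}\beta$, $\beta^{*}\alpha^{**}\alpha$, $\beta^{**}\alpha^{*}\beta$, each multiplied by either $(ab)^{r}-\beta^{2r}$ or $(ab)^{r}-\alpha^{2r}$; these are precisely the groupings displayed in the theorem. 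The case $n$ odd proceeds identically but with the roles of the single- and double-starred quantities interchanged, which accounts both for the overall sign flip and for the shifted $(ab)^{r-1}$ denominator in the real part.

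The main obstacle is exactly this $\varepsilon$-part bookkeeping: because $\alpha^{*}$, $\beta^{*}$, $\alpha^{**}$, $\beta^{**}$ are non-commuting quaternions, reordering is forbidden, and confirming that the four expanded cross-terms assemble into the precise pattern stated in the theorem is what absorbs the bulk of the computation; the real part, by contrast, is essentially a transcription of the classical Catalan identity for bi-periodic Fibonacci numbers weighted by $\alpha^{*}$ and $\beta^{*}$.
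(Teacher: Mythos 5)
Your proposal follows exactly the paper's own route: decompose $\widetilde{Q}_{n-r}\widetilde{Q}_{n+r}-\widetilde{Q}_{n}^{2}$ into its real and $\varepsilon$ parts via $\widetilde{Q}_{m}=Q_{m}+\varepsilon Q_{m+1}$ and then substitute the Binet formula of Theorem 3.1, using $\alpha\beta=-ab$ and the evenness of $r$ to produce the factors $(ab)^{r}-\alpha^{2r}$ and $(ab)^{r}-\beta^{2r}$. The paper's proof stops after the decomposition and simply says ``use the Binet formula,'' so your additional bookkeeping of the parities, the $(ab)^{\lfloor m/2\rfloor}$ denominators, and the non-commuting starred factors is a correct and more explicit rendering of the same argument.
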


\begin{proof}
	From $\left( \ref{3}\right) $, we can write the following equality$\ $
	\begin{eqnarray*}
		\widetilde{Q}_{_{n-r}}\widetilde{Q}_{_{n+r}}-\widetilde{Q}_{_{n}}^{2}
		&=&(Q_{_{n-r}}+\varepsilon Q_{n-r+1})\times (Q_{n+r}+\varepsilon
		Q_{n+r+1})-(Q_{n}+\varepsilon Q_{n+1})^{2} \\
		&=&Q_{_{n-r}}Q_{_{n+r}}-Q_{_{n}}^{2} \\
		&&+\varepsilon (Q_{n-r+1}Q_{n+r}+Q_{n-r}Q_{n+r+1}-Q_{n}Q_{n+1}-Q_{n+1}Q_{n})
	\end{eqnarray*}
	and use the Binet formula for the bi-periodic dual Fibonacci quaternion in \ref{4}, we can find the desired result.
\end{proof}

\begin{thm}[Cassini's identity]
	$(i)$ Cassini's formula of the bi-periodic dual Fibonacci quaternions for 
	\textbf{odd }indices is 
	\begin{eqnarray*}
		\widetilde{Q}_{_{2m-1}}\widetilde{Q}_{_{2m+3}}-\widetilde{Q}_{_{2m+1}}^{2}
		&=&-\left\{ \frac{((ab)^{2}-\beta ^{4})\alpha ^{\ast \ast }\beta ^{\ast \ast
			}+((ab)^{2}-\alpha ^{4})\beta ^{\ast \ast }\alpha ^{\ast \ast }}{(ab)(\alpha
			-\beta )^{2}}\right\} \\
		&& \\
		&&-\varepsilon \left\{ 
		\begin{array}{c}
			\frac{\alpha \left[ ((ab)^{2}-\beta ^{4})\alpha ^{\ast }\beta ^{\ast \ast
				}+((ab)^{2}-\alpha ^{4})\beta ^{\ast \ast }\alpha ^{\ast }\right] }{%
				(ab)^{2}(\alpha -\beta )^{2}} \\ 
			\\ 
			+\frac{\beta \left[ ((ab)^{2}-\alpha ^{4})\beta ^{\ast }\alpha ^{\ast \ast
				}+((ab)^{2}-\beta ^{4})\alpha ^{\ast \ast }\beta ^{\ast }\right] }{(ab)^{2}(\alpha -\beta )^{2}}
		\end{array}%
		\right\} , \\
		&&
	\end{eqnarray*}
	$(ii)$ For \textbf{even} indices Cassini's formula is \\
		\begin{eqnarray*}
		\widetilde{Q}_{_{2m-2}}\widetilde{Q}_{_{2m+2}}-\widetilde{Q}_{_{2m}}^{2} &=&
		\frac{((ab)^{2}-\beta ^{4})\alpha ^{\ast }\beta ^{\ast }+((ab)^{2}-\alpha
			^{4})\beta ^{\ast }\alpha ^{\ast }}{(ab)^{2}(\alpha -\beta )^{2}} \\
		&& \\
		&&+\varepsilon \left\{ 
		\begin{array}{c}
			\frac{(\alpha ^{\ast \ast }\beta ^{\ast }\alpha +\alpha ^{\ast }\beta ^{\ast
					\ast }\beta )((ab)^{2}-\beta ^{4})}{(ab)^{2}(\alpha -\beta )^{2}} \\ 
			\\ 
			+\frac{(\beta ^{\ast }\alpha ^{\ast \ast }\alpha +\beta ^{\ast \ast }\alpha
				^{\ast }\beta )((ab)^{2}-\alpha ^{4})}{(ab)^{2}(\alpha -\beta )^{2}}
		\end{array}%
		\right\} . \\
		&&
	\end{eqnarray*}
\end{thm}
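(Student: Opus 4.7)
The plan is to recognize Cassini's identity as the specialization of Catalan's identity (the preceding theorem) to the case $r = 2$, thereby avoiding a separate induction. The index pair $(2m-1,\,2m+3)$ centered at $2m+1$, and the pair $(2m-2,\,2m+2)$ centered at $2m$, are both of the form $(n-r,\,n+r)$ with $r=2$; since $r=2$ is a nonnegative even integer, the hypothesis of Catalan's identity is met.

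First I would address the odd-index case by taking $n = 2m+1$ (so $n$ is odd) and $r = 2$ in the odd branch of Catalan's formula. Substituting $(ab)^{r-1} = ab$, $(ab)^{r} = (ab)^{2}$, $\alpha^{2r} = \alpha^{4}$ and $\beta^{2r} = \beta^{4}$ into
\[
\widetilde{Q}_{n-r}\widetilde{Q}_{n+r} - \widetilde{Q}_{n}^{2}
= -\frac{\alpha^{**}\beta^{**}\bigl((ab)^{r}-\beta^{2r}\bigr) + \beta^{**}\alpha^{**}\bigl((ab)^{r}-\alpha^{2r}\bigr)}{(ab)^{r-1}(\alpha-\beta)^{2}} + \varepsilon(\cdots)
\]
should reproduce the stated formula for $\widetilde{Q}_{2m-1}\widetilde{Q}_{2m+3}-\widetilde{Q}_{2m+1}^{2}$ exactly, including the $\varepsilon$-part with the ordered cross terms $\alpha^{*}\beta^{**}\alpha$, $\alpha^{**}\beta^{*}\beta$, $\beta^{*}\alpha^{**}\beta$, and $\beta^{**}\alpha^{*}\alpha$.

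Next I would repeat the substitution for part $(ii)$: set $n = 2m$ (so $n$ is even) and $r = 2$ in the even branch of Catalan's identity. The denominator becomes $(ab)^{r}(\alpha-\beta)^{2} = (ab)^{2}(\alpha-\beta)^{2}$, and the factors $(ab)^{r}-\alpha^{2r}$, $(ab)^{r}-\beta^{2r}$ become $(ab)^{2}-\alpha^{4}$, $(ab)^{2}-\beta^{4}$, yielding the stated even-index Cassini formula, again with the $\varepsilon$-part matching term by term.

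The main obstacle, should one prefer an independent proof from the Binet formula \ref{4} rather than appealing to Catalan, would be tracking the floor-function exponents $\lfloor (2m\pm 2)/2\rfloor$ and $\lfloor (2m+1\pm 2)/2\rfloor$ together with the non-commutativity of the quaternion units, which produces ordered products like $\alpha^{*}\beta^{**}$ versus $\beta^{**}\alpha^{*}$ that must be kept distinct. Going through Catalan sidesteps this bookkeeping, so the proof reduces to a mechanical substitution together with a line confirming $r=2$ preserves the parity hypothesis in each branch.
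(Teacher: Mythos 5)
Your proposal matches the paper's own proof exactly: both parts are obtained by substituting $r=2$ together with $n=2m+1$ (odd case) and $n=2m$ (even case) into the corresponding branches of Catalan's identity. The approach and the level of detail are essentially identical to what the authors do.
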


\begin{proof}
	$(i)$ If we take $r=2$ and $n=2m+1$ in the Catalan's formula then we get the
	Cassini's formula.
	
	$(ii)$ We put the $r=2$ and $n=2m$ in the Catalan's formula, we obtain the
	Cassini's formula for even indices.
\end{proof}

\section{Conclusions}

We want to emphasized that the characterizations some of the dual Fibonacci
quaternions can be introduced with the convenient parameter $a$ and $b.$ For
example, the following results are stated

$(i)$ for dual Fibonacci quaternions with the parameter $a=b=1,$

$(ii)$ for dual Pell quaternions with the parameter $a=b=2$,

$(iii)$ for dual $k-$Fibonacci quaternions with the parameter $a=b=k$.

Although, the option $(i)$ is introduced by Nurkan and G\"{u}ven in \cite{Kaya}, the other options $(ii)$ and $(iii)$ are unnecessary to publish in
literature. Because, these are special cases of our paper.

\section*{References}

\end{document}